\newcommand{\bd}[1]{\textbf{#1}}
\newtheorem{theorem}{Theorem}
\newtheorem{proposition}{Proposition}
\title{Pricing Residential Electricity Based on Individual Consumption Behaviors}
\author{\IEEEauthorblockN{
Siddharth Patel\IEEEauthorrefmark{1},
Raffi Sevlian\IEEEauthorrefmark{2},
Baosen Zhang\IEEEauthorrefmark{3}, and
Ram Rajagopal\IEEEauthorrefmark{1} \\
\IEEEauthorblockA{\IEEEauthorrefmark{1}Department of Civil and Environmental Engineering, Stanford University, Stanford, CA, 94305, USA} \\
\IEEEauthorblockA{\IEEEauthorrefmark{2}Department of Electrical Engineering, Stanford University, Stanford, CA, 94305, USA} \\
\IEEEauthorblockA{\IEEEauthorrefmark{3}Department of Electrical Engineering, University of Washington, Seattle, WA, 98195, USA}
\thanks{This research was supported in part by the TomKat Center for Sustainable Energy, the Precourt Institute for Energy Efficiency, and the Thomas V. Jones Stanford Graduate Fellowship in Science \& Engineering.} }}
\begin{document}
%

%
%
%

%
%

\markboth{DRAFT}%
{Shell \MakeLowercase{\textit{et al.}}: Bare Demo of IEEEtran.cls for Journals}
%



\maketitle

\begin{abstract}
The conventional practice of retail electric utilities is to aggregate customers geographically.
The utility purchases electricity for its customers via bulk transactions on the wholesale market, and it passes these costs along to its customers, the end consumers, through their rate plan.
Typically, all residential consumers are offered the same per unit rate plan, which leads to cost sharing.
Some consumers use their electricity at peak hours, when it is more expensive on the wholesale market, and others consume mostly at off peak hours, when it is cheaper, but they all enjoy the same per unit rate through their utility.
This paper proposes a method for the utility to segment a population of consumers on the basis of their individual consumption patterns.
An optimal recruitment algorithm is developed to aggregate consumers into groups with a relatively low per unit cost of electricity on the wholesale market.
Enough consumers are grouped to ensure reduced forecast error and consequently diminish wholesale electricity costs for the aggregator.  
The resulting optimal rate groups are stable in that no one consumer can unilaterally improve her outcome. 
\end{abstract}

\begin{IEEEkeywords}
Smart meter, electricity consumption, utility, aggregation, load-serving entity
\end{IEEEkeywords}

%
\IEEEpeerreviewmaketitle

\section{Introduction}
%
%
%
%

\IEEEPARstart{E}{lectrical} retail utilities function as intermediaries between the wholesale market and end consumers. 
Utilities purchase in bulk for the consumers they represent and then factor the cost of these bulk purchases into the rate plans offered to the consumers.
This middleman service provides advantages for the system operator and the consumers.
The system operator can deal with a mass of customers through a single agent instead of undertaking many thousands of transactions with end consumers.
The consumers are spared the complexities of entering the wholesale market.
A more fundamental benefit of aggregating consumers is the reduction of uncertainty in load forecasts. 
While the day-ahead consumption of individual consumers is difficult to forecast to within 50\% accuracy, the aggregate consumption of a large group of consumers can be accurately forecasted with errors smaller than 2\% \cite{Christiaanse71,Hagan87,Sevlian14}.
Thus, by aggregating a group of customers, utilities can provide the system operator with higher-accuracy forecasts of load.
As a consequence, utilities face less risk in their transactions on the wholesale market.

Traditionally, the cost of the bulk electricity is allocated to the residential consumers using simple proportional rate plans: all customers face a single rate, and each customer's bill is computed by multiplying that rate by the customer's total usage.
However, most utilities are moving away from this simple scheme because of \emph{cost sharing} among customers.
The wholesale market price varies greatly based on time of day and geographical location.\footnote{For California, prices in the afternoon at the inland areas tends to be much higher than prices at night for coastal areas.}
If all consumers face a single rate, then those that use electricity at cheaper times or locations are subsidizing those that use electricity at more expensive times or locations.
Enabled by advances in customer metering (e.g. advanced metering infrastructure), utilities have started designing rates more representative of usage.

New rates mainly take two forms: geography-based and time-based. For example, Pacific Gas and Electric Company (PG\&E) divides northern California into zones and charges a different rate for each zone.
Customers also face time of use (TOU) prices which charge different prices for electricity consumed at different times of the day.
However, both methods still do not take individual consumer behaviors into account.
It is known that consumers from the same geographical area may exhibit drastically different behaviors \cite{Patel2014}, as illustrated in Figure \ref{fig:two_users}.
Also, time of use prices may lead to large changes in customer bills since small shifts in temporal behaviors can result in significant cost differences.
Furthermore, prior work in \cite{Ito12} established that residential consumers are sensitive to the average price of electricity, more so than to the marginal price.
Therefore a constant average price is also more beneficial in influencing customer behaviors.
We investigate how an agent can provide differing average prices for its customers. 


\begin{figure}[ht]
\centering
\includegraphics[width=3.5in,trim={0 0 0 15mm},clip]{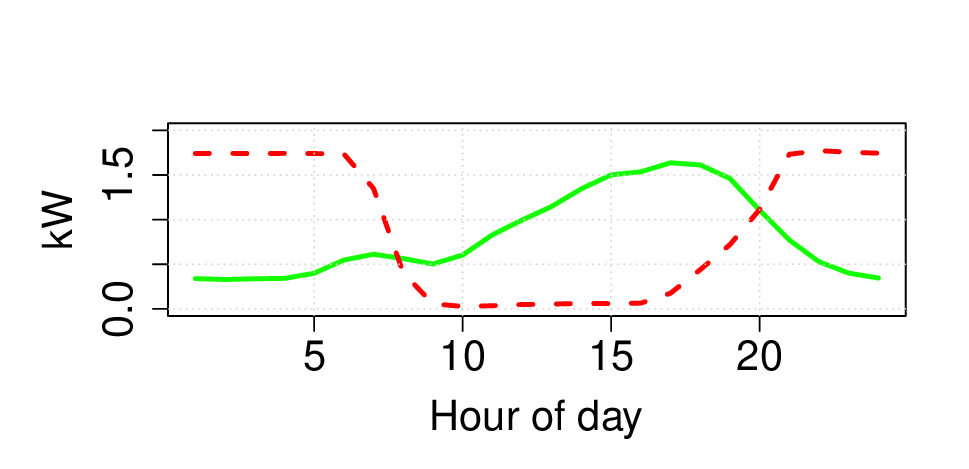}
\\[-4mm]
\caption{The two curves are the mean daily usage patterns for two different consumers in Bakersfield, California, over a period of one year. The solid green curve is for a consumer who uses electricity mostly at times when the price of electricty is high, whereas the dashed red curve is for a consumer who uses electricity mostly at off-peak times. This demonstrates that two consumers nearby each other can have very different consumption behaviors.}
\label{fig:two_users}
\end{figure}

In this paper, we design an optimal aggregation and pricing scheme by explicitly considering the behaviors of individual consumers.
We aggregate consumers into groups where a single rate is charged for all consumers within a group.
This design balances the two competing desires of rate design: limiting cost sharing and reducing uncertainty.
We propose a method to optimally segment consumers into groups for given levels of uncertainty tolerated by the groups.
Our scheme can be used by any agent that purchases electricity for consumers on the wholesale market, such as a utility or other load-serving entity (LSE).
For the remainder of this paper, we will use LSE to denote any such agent.

We model the electricity market as a simple two-stage market. At the day ahead stage, the LSE forecasts its customers' consumption and purchases some amount of electricity \cite{Allcott09,Roozbehani12}.
Any discrepancy between the amount purchased in advance and the customers' actual consumption is settled at the real time price.
Thus, the LSE faces risks in its wholesale market transactions due to forecast errors and to uncertainty in the real time price, whose value is unknown at the day ahead stage and can spike sharply \cite{Kirschen04}.
Aggregation enables the LSE to mitigate the risk of high forecast errors.

An important application of our method is designing rate plans in deregulated retail markets.
The deregulation of the retail electricity market has opened the possibility for LSEs to offer a variety of rate plans for residential consumers.
In ERCOT's geography, for example, there are over 200 plans with a range of prices and schemes.
These plans have experienced significant turnover of their customer base, which is undesirable from the standpoint of an individual LSE \cite{AEE2013,Carson2012}.
We show that our design method induces a stable partitioning of the consumers, in the sense that no consumer can reduce his cost by unilaterally moving from one group to another.

\subsection{Our approach}

We develop a fractional integer program for aggregating groups of consumers with the lowest per unit cost of electricity, and we present an optimal solution to this nonconvex problem.
The LSE then uses the group's average per unit cost as the basis for its rate.
As the group size gets larger, this average per unit cost increases, but the forecast error decreases \cite{Sevlian14}.
We quantify this trade-off in our dataset of hourly smart meter readings for over 100,000 residential consumers for one year.
We propose a method for the LSE to determine its preferred group size and composition.
The LSE can extend this method to segment an entire population of consumers into different groups based on their average cost to serve and a common acceptable level of forecast error.
This segmentation scheme creates groups of consumers that are stable in the sense that no one consumer can improve her situation unilaterally.

\subsection{Outline}

In Section II of this paper, we formalize the aggregation problem facing the LSE.
In Section III, we present an optimal algorithm for constructing groups of consumers with a low per unit cost of electricity.
In Section IV, we calculate the uncertainty faced by a group of consumers and demonstrate the trade-off between rate and uncertainty.
In Section V, we show how to segment the entire population using our method.
We close with some concluding remarks in Section VI.

\section{Problem Statement}

We consider the scenario of an LSE who purchases electricity on behalf of a group of residential consumers in a two-stage wholesale market. We begin by defining the costs this LSE incurs for purchasing electricity.
We introduce a selection vector $\textbf{u}\,\epsilon\,\left\lbrace0,1\right\rbrace^N$, where $N$ is the size of the entire population of consumers, and $\textbf{u}_i = 1$ if the $i$th consumer is part of the group of consumers for which the LSE purchases electricity.
Let $\textbf{d}^{(i)}$ denote the electricity consumption of the $i$th consumer on a given day.
We assume that this consumption is nonnegative (i.e., no reverse power flow).
The total consumption of the group of consumers on the given day is
\begin{equation}
\textbf{d} = \Sigma_{i=1}^N \textbf{u}_i\textbf{d}^{(i)}
\label{eq:d_def}
\end{equation}
 
The two stages of the wholesale electricity market are the day ahead market and the real time market.
At the day ahead stage, the LSE forecasts the aggregate consumption of the consumers for the next day, $\hat{\textbf{d}}$.
Based on that forecast, the LSE purchases an amount of electricity for them, $\tilde{\textbf{d}}$, at the day ahead price, $\textbf{p}$.
The cost incurred at the day ahead stage is $\textbf{p}^T\tilde{\textbf{d}}$.
The next day, the consumers' actual consumption, $\textbf{d}$, is realized. 
The difference between what they consume and what the LSE previously purchased is settled at the real time price, $\textbf{q}$.
We assume that \textbf{p} and \textbf{q} are nonnegative.

For the $k$th day, the total cost $c_k$ that the LSE pays is:
\begin{equation}
c_k = \textbf{p}_k^{T}\tilde{\textbf{d}}_k + \textbf{q}_k^{T}(\textbf{d}_k-\tilde{\textbf{d}}_k).
\label{eqn:Initial_ck}
\end{equation}
Over a period of $K$ days, the LSE will pay an average per unit cost of electricity $r_K$, given by:
\begin{equation}
r_K = \frac{\Sigma_{k=1}^K c_k}{\Sigma_{k=1}^K \textbf{1}^{T}\textbf{d}_k} = \frac{\frac{1}{K}\Sigma_{k=1}^K c_k}{\frac{1}{K}\Sigma_{k=1}^K \textbf{1}^{T}\textbf{d}_k}.
\label{eq:Initial_rk}
\end{equation}
(Note that the rates for regulated utilities are defined in a similar way as the average per unit cost over a period of time.)

The amount of electricity purchased at day-ahead $\tilde{\textbf{d}}_k$ may be different than the forecasted consumption $\hat{\textbf{d}}_k$.
For example, the LSE may regularly decide to purchase slightly more than the forecasted amount in order to avoid the risk of paying a very high real time penalty.
On the other hand, the real time price and the day ahead price are typically nearly equal in expectation\cite{Kirschen04}, so the LSE needs to carefully balance conservatism and cost.

The LSE seeks to minimize $r_K$, which is a random variable.
For mathematical simplicity, instead of working with the expectation of the entire ratio, we focus on the numerator.
Suppose that the LSE enters into a contract with the consumers it is representing for a period of $K$ days.
For large $K$, we can replace the numerator in the right-most fraction in equation (\ref{eq:Initial_rk}) with the expected value of $c_k$:
\begin{equation}
r_K = \frac{K\,\mathbb{E}_{\textbf{d}_k}\left[c_k\right]}{\Sigma_{k=1}^K \textbf{1}^{T}\textbf{d}_k}.
\label{eq:RK_with_expectation}
\end{equation}
We can expand the expectation in the numerator of (\ref{eq:RK_with_expectation}):
\begin{equation}
\mathbb{E}_{\textbf{d}_k}\left[c_k\right] = \mathbb{E}_{\textbf{d}_k}\left[\textbf{p}_k^{T}\tilde{\textbf{d}}_k + \textbf{q}_k^{T}(\textbf{d}_k-\tilde{\textbf{d}}_k)\right].
\label{eq:Exp_ck}
\end{equation}

We can write $\hat{\textbf{d}}_k = \textbf{d}_k + \boldsymbol{\epsilon}_k$; in other words, the forecasted consumption is the actual consumption plus an error term, $\boldsymbol{\epsilon}_k$.
As discussed previously, the LSE can choose at the day ahead stage to buy a different amount of electricity than its forecast.
Let the difference between the two be $\boldsymbol{\delta}_k$.
Then $\tilde{\textbf{d}}_k = \hat{\textbf{d}}_k + \boldsymbol{\delta}_k$.
We now rewrite the right hand side of (\ref{eq:Exp_ck}) as:
\begin{equation}
\mathbb{E}_{\textbf{d}_k}\left[\textbf{p}_k^{T}(\textbf{d}_k + \boldsymbol{\epsilon}_k+\boldsymbol{\delta}_k)\right] + \mathbb{E}_{\textbf{d}_k}\left[\textbf{q}_k^{T}(-\boldsymbol{\epsilon}_k - \boldsymbol{\delta}_k)\right].
\label{eq:Exp_ck_with_eps_delta}
\end{equation}
We apply the fact that $\mathbb{E}\left[xy\right]=\text{Cov}(x,y)+\mathbb{E}\left[x\right]\mathbb{E}\left[y\right]$ to expand (\ref{eq:Exp_ck_with_eps_delta}):
\begin{subequations}
\begin{align*}
& =  \mathbb{E}_{\textbf{d}_k}\left[\textbf{p}_k^T\textbf{d}_k\right] & + \text{tr}\left(\text{Cov}\left(\textbf{p}_k,\boldsymbol{\epsilon}_k\right)\right) +\mathbb{E}_{\textbf{d}_k}\left[\textbf{p}_k\right]^T\mathbb{E}_{\textbf{d}_k}\left[\boldsymbol{\epsilon}_k\right] \\
& & + \text{tr}\left(\text{Cov}\left(\textbf{p}_k,\boldsymbol{\delta}_k\right)\right) +\mathbb{E}_{\textbf{d}_k}\left[\textbf{p}_k\right]^T\mathbb{E}_{\textbf{d}_k}\left[\boldsymbol{\delta}_k\right] \\
& & - \text{tr}\left(\text{Cov}\left(\textbf{q}_k,\boldsymbol{\epsilon}_k\right)\right) -\mathbb{E}_{\textbf{d}_k}\left[\textbf{q}_k\right]^T\mathbb{E}_{\textbf{d}_k}\left[\boldsymbol{\epsilon}_k\right] \\
& & - \text{tr}\left(\text{Cov}\left(\textbf{q}_k,\boldsymbol{\delta}_k\right)\right) -\mathbb{E}_{\textbf{d}_k}\left[\textbf{q}_k\right]^T\mathbb{E}_{\textbf{d}_k}\left[\boldsymbol{\delta}_k\right] 
\end{align*}
\label{eq:Exp_ck_expanded}
\end{subequations}
We assume that the LSE uses an unbiased forecaster, so $\mathbb{E}\left[\boldsymbol{\epsilon}_k\right] = \textbf{0}$.
Furthermore, we assume that the electricity market is an efficient market, meaning that $\mathbb{E}\left[\textbf{p}_k\right]=\mathbb{E}\left[\textbf{q}_k\right]$.
Under these conditions, we can simplify the above sum to:
\begin{equation}
\mathbb{E}_{\textbf{d}_k}\left[\textbf{p}_k^T\textbf{d}_k\right]+
\text{tr}\left(\text{Cov}\left(\textbf{p}_k-\textbf{q}_k,\boldsymbol{\epsilon}_k\right)\right)+\text{tr}\left(\text{Cov}\left(\textbf{p}_k-\textbf{q}_k,\boldsymbol{\delta}_k\right)\right).
\label{eq:Exp_ck_reduced}
\end{equation}

The following two sections examine how the LSE can use this model to evaluate the per unit cost and uncertainty involved in purchasing electricity for its customers.

\section{Cost-based Aggregation}

We begin by outlining some simplifcations that allow us to focus on the cost of serving an individual consumer and groups of consumers.
In this section we will assume that the forecasting errors are independent of the difference between the day ahead and real time price.
This means that the second term of (\ref{eq:Exp_ck_reduced}), $\text{tr}\left(\text{Cov}\left(\textbf{p}_k-\textbf{q}_k,\boldsymbol{\epsilon}_k\right)\right)$, equals zero.
This assumption is standard in the literature \cite{Bitar2012},\cite{Zhao2013}.\footnote{The independence assumption may not hold generally.
For example, suppose the LSE's forecast errors are correlated positively to the forecast errors of all buyers on the market. Suppose on a given day that the LSE's forecast error is negative.
We would then expect that the total amount of electricity purchased in the market at the day ahead stage will be below the total demand.
This would drive up the real time price due to a shortage of generation scheduled at the day ahead stage.}

The first term in (\ref{eq:Exp_ck_reduced}), $\mathbb{E}_{\textbf{d}_k}\left[\textbf{p}_k^T\textbf{d}_k\right]$, represents how aligned the group's consumption is with the day ahead price.
If the group uses most of its electricity during times when the price is high, then that term will be large. The last term in (\ref{eq:Exp_ck_reduced}), $\text{tr}\left(\text{Cov}\left(\textbf{p}_k-\textbf{q}_k,\boldsymbol{\delta}_k\right)\right)$, captures whether, on average, the LSE saves money or loses money by purchasing something different than what the forecaster predicts for the group's consumption.
We will assume that the LSE is unable to make money in this way.
In this case, the LSE's best strategy is to set $\boldsymbol{\delta}_k = \textbf{0}$, and we are left with $\mathbb{E}_{\textbf{d}_k}\left[c_k\right] = \mathbb{E}_{\textbf{d}_k}\left[\textbf{p}_k^T\textbf{d}_k\right]$.
With these simplifications in place, the rate paid by the LSE per unit of electricity is:
\begin{equation}
r_K = \frac{K\,\mathbb{E}_{\textbf{d}_k}\left[\textbf{p}_k^T\textbf{d}_k\right]}{\Sigma_{k=1}^K \textbf{1}^{T}\textbf{d}_k} = \lambda_K
\label{eq:simple_rate}
\end{equation}

When recruiting consumers, the LSE seeks to minimize the rate that it will pay for their electricity.
The LSE does not know how they will consume electricity in the future, but it can use historical data to compute an estimate of the $\lambda_K$ it would pay if it were serving the $i$th consumer:
\begin{equation}
\hat{\lambda}^{(i)} = \frac{\Sigma_{h=1}^H \textbf{p}_h^T\textbf{d}^{(i)}_h}{\Sigma_{h=1}^H \textbf{1}^{T}\textbf{d}^{(i)}_h},
\label{eqn:lambda_def}
\end{equation}
where $H$ is the number of days of historical data available.
We assume that the consumers' history is a good predictor of their future behavior in terms of how aligned their electrical consumption is with the day-ahead price.

To achieve the lowest possible rate, the LSE would choose to recruit and serve the single consumer with the lowest value of $\hat{\lambda}$.
This would be the consumer whose consumption is most \emph{orthogonal} to the day-ahead price vector, given the assumed nonnegativity of consumption and price.
In other words, this consumer uses electricity mostly at off-peak hours.

This may not be a feasible recruitment plan for the LSE for two main reasons.
First, a single consumer likely will not be a viably large basis for the LSE's operation.
Second, the electrical consumption of an individual residential consumer is highly variable day-to-day and subject to forecasting errors of 50\% \cite{Patel2014}.
This may lead to large fluctuations in the day-to-day per unit cost of electricity paid by the LSE.

Therefore, the LSE needs a method to recruit and aggregate multiple consumers into a group that has a low cost to serve.
For a group specified by the selection vector $\textbf{u}$, we define the cost to serve metric $\hat{\lambda}_{\textbf{u}}$ as follows,

\begin{equation}
\hat{\lambda}_{\textbf{u}} = \frac{\Sigma_{h=1}^H \textbf{p}_h^T\textbf{d}_h}{\Sigma_{h=1}^H \textbf{1}^{T}\textbf{d}_h},
\label{eq:lambda_group_def}
\end{equation}

where $\textbf{d}_h$ is the consumption of the group on the $h$th day.

\subsection{Optimal Recruitment Algorithm}

Suppose the LSE wishes to recruit the $M$ consumers who will have the lowest cost to serve among any group of that size.
Let $\textbf{P}_H$ denote the concatenation of $\textbf{p}_h$ vectors for $h=1$ to $H$.
Similarly, $\textbf{D}_H^{(i)}$ is the concatenation of $\textbf{d}_h^{(i)}$ vectors.
Choosing the $M$ best consumers to minimize $\hat{\lambda}_{\textbf{u}}$ is given by the following optimization problem:

\begin{subequations}
\label{eqn:1}
\begin{align}
\underset{\textbf{u}}{\text{minimize}} \; & \frac{\sum_{i=1}^N \textbf{u}_i \bd P_H^T \bd D_H^{(i)}}{\sum_{i=1}^N \textbf{u}_i \bd 1^T \bd D_H^{(i)}} \\
\mbox{subject to} \; & \bd 1^T \bd u = M,\; \bd u_i \in \{0,1\}.
\end{align}
\end{subequations}

Let $\textbf{t}_i = \bd P_H^T \bd D_H^{(i)}$ and $\textbf{w}_i = \bd 1^T \bd D_H^{(i)}$.  We rewrite the optimization as:
\begin{subequations} \label{eqn:2}
\begin{align}
\underset{\textbf{u}}{\text{minimize}} \; & \frac{\bd u^T \bd t}{\bd u^T \bd w} \\
\mbox{subject to} \; & \bd 1^T \bd u = M,\; \bd u_i \in \{0,1\}.
\end{align}
\end{subequations}
Finally, we introduce a slack variable $\lambda$ and obtain:
\begin{subequations} \label{eqn:3}
\begin{align}
\underset{\textbf{u}, \lambda}{\text{minimize}} \; & \lambda \\
\mbox{subject to} \;& (\bd t - \lambda \bd w)^T \bd u \leq 0\\
& \bd 1^T \bd u = M,\; \bd u_i \in \{0,1\}.
\end{align}
\end{subequations}

In general, the above combinatorial optimization problems, called linear fractional programs, are difficult to solve. Because all parameters in our problem are positive, there is an efficient bisection algorithm for a feasibility problem which tries to find $\bd u$ and $\lambda$ to satisfy (\ref{eqn:3}b) and (\ref{eqn:3}c).
This can be performed in a greedy fashion per Algorithm \ref{algorithm-optimal}.
For a given $\lambda$, we rank each element of the vector $(\bd t - \lambda \bd w)$ and choose the smallest $M$ elements with the selection vector $\bd u$.
If $(\bd t - \lambda \bd w)^T \bd u \leq 0$, then we have found a feasible solution for the given value of $\lambda$; otherwise, no solution exists for this value of $\lambda$. 

\begin{algorithm}
\begin{algorithmic}[1]
\State $\textbf{Initialize bisection method:}$
\State $\overline{\lambda} \leftarrow \max\{t_1/w_1, \hdots, t_N/w_N\}$
\State $\underline{\lambda} \leftarrow \min\{t_1/w_1, \hdots, t_N/w_N\}$
\State $\text{Set}\;\gamma$
\Comment{Convergence threshold}
\State $\textbf{Bisection method:}$
\While{$\overline{\lambda} - \underline{\lambda} > \gamma $}
\State $\lambda = (\underline{\lambda}  + \overline{\lambda})/2$
\Comment{Update current $\lambda$} 
\State \text{Compute}\;$\left( \mathbf{t} - \lambda  \mathbf{w} \right)$
\State $\text{Sort}\left( \mathbf{t} - \lambda  \mathbf{w} \right) \text{in ascending order} \text{ to obtain}  \{i_1 \hdots i_N\} $
\State \text{Construct} $\textbf{u}_{\lambda, i} = 1 \text{ if } i \in \{i_1,  \hdots, i_M\}$
\If{$\left( \mathbf{t} - \lambda  \mathbf{w} \right)^{T} \bd u_{\lambda}  \le 0$}
\Comment{Feasibility test}
\State $\textbf{u}_{\lambda} \text{ feasible}$
\Else
\State $\textbf{u}_{\lambda} \leftarrow \emptyset$
\EndIf
\If{$\bd u_{\lambda} = \emptyset$}
\State $\underline{\lambda}  \leftarrow \lambda$
\Comment {Infeasible, increase lower bound}
\Else
\State $\overline{\lambda}  \leftarrow \lambda$
\Comment {Feasible, decrease upper bound}
\EndIf
\EndWhile
\State $\textbf{Result:}$
\State Minimum $\lambda$
\State Associated selection vector $\textbf{u}_{\lambda}$
\Comment{$\bd 1^T \textbf{u}_{\lambda} = M$}
\end{algorithmic}
\caption{The LSE can use this alorithm to select the group of $M$ customers who had the lowest cost to serve $\hat{\lambda}_{\textbf{u}}$ over a period of historical data.}
\label{algorithm-optimal}
\end{algorithm}

\begin{theorem}
Algorithm \ref{algorithm-optimal} returns the minimum feasible value of $\lambda$ (within a tolerance of $\gamma$).
\end{theorem}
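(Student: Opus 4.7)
The plan is to prove correctness by reducing Algorithm \ref{algorithm-optimal} to a standard bisection on a monotone feasibility predicate. First, I will argue that the target quantity $\lambda^\star$ --- the minimum $\lambda$ such that some $\bd u \in \{0,1\}^N$ with $\bd 1^T \bd u = M$ satisfies $(\bd t - \lambda \bd w)^T \bd u \leq 0$ --- coincides with the optimum of (\ref{eqn:2}). Since each $w_i = \bd 1^T \bd D_H^{(i)} > 0$, we have $\bd u^T \bd w > 0$ for any feasible $\bd u$, so the inequality $(\bd t - \lambda \bd w)^T \bd u \leq 0$ is equivalent to $\bd u^T \bd t / \bd u^T \bd w \leq \lambda$. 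Consequently, minimizing $\lambda$ over feasible pairs $(\bd u, \lambda)$ of (\ref{eqn:3}) matches minimizing the fractional objective of (\ref{eqn:2}).

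Next, I will show that lines 8--10 exactly solve the inner feasibility problem. For fixed $\lambda$, minimizing the linear functional $(\bd t - \lambda \bd w)^T \bd u$ subject only to the cardinality constraint $\bd 1^T \bd u = M$, $\bd u \in \{0,1\}^N$, is trivially solved by picking the indices of the $M$ smallest entries of $\bd t - \lambda \bd w$. Hence the inequality is satisfiable iff the sum of those $M$ smallest entries is nonpositive --- precisely the test performed by the algorithm. Let $f(\lambda)$ denote this inner minimum. Because $\bd w \geq 0$ and every feasible $\bd u \geq 0$, each affine candidate $(\bd t - \lambda \bd w)^T \bd u$ has slope $-\bd w^T \bd u \leq 0$ in $\lambda$; being the pointwise minimum over finitely many such affine functions, $f$ is concave, continuous, and non-increasing. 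Therefore the feasibility set $\{\lambda : f(\lambda) \leq 0\}$ is an interval of the form $[\lambda^\star, \infty)$.

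I will then verify bisection correctness via the loop invariant $\underline{\lambda} \leq \lambda^\star \leq \overline{\lambda}$. The invariant holds initially because $\lambda^\star = \sum_{i \in S^\star} t_i \big/ \sum_{i \in S^\star} w_i$ is a $w$-weighted mean of the ratios $\{t_i/w_i\}_{i \in S^\star}$, and therefore lies between $\min_i t_i/w_i$ and $\max_i t_i/w_i$, which are exactly the initial bounds set in lines 2--3. Each iteration preserves the invariant: if the midpoint $\lambda$ passes the feasibility test, then $\lambda^\star \leq \lambda$, so we tighten $\overline{\lambda} \leftarrow \lambda$; otherwise $\lambda^\star > \lambda$, so we tighten $\underline{\lambda} \leftarrow \lambda$. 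Since the interval halves each iteration, the loop terminates with $\overline{\lambda} - \underline{\lambda} \leq \gamma$, and the returned $\overline{\lambda}$ is within $\gamma$ of $\lambda^\star$.

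The main obstacle is keeping the two reductions airtight --- in particular, confirming $\bd u^T \bd w > 0$ for the ratio reformulation, and verifying that the weighted-mean characterization of $\lambda^\star$ really places it inside the initial bracket. Once these equivalences are nailed down, the remainder is the textbook convergence argument for bisection on a non-increasing zero-crossing predicate and introduces no additional ideas.
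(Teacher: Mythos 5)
Your proof is correct and follows essentially the same strategy as the paper: show that the feasibility predicate ``the $M$ smallest entries of $\mathbf{t}-\lambda\mathbf{w}$ sum to a nonpositive value'' is monotone in $\lambda$, so bisection converges to the unique transition point $\lambda^\star$, which coincides with the optimum of the fractional program because $\mathbf{u}^T\mathbf{w}>0$. Your packaging of the monotonicity via the concave, non-increasing piecewise-affine function $f(\lambda)=\min_{\mathbf{u}}(\mathbf{t}-\lambda\mathbf{w})^T\mathbf{u}$ is equivalent to the paper's two explicit implications ($\lambda\in\Lambda\Rightarrow\lambda+\epsilon\in\Lambda$ and $\lambda\notin\Lambda\Rightarrow\lambda-\epsilon\notin\Lambda$), and you additionally verify via the weighted-mean argument that the initial bracket $[\min_i t_i/w_i,\max_i t_i/w_i]$ contains $\lambda^\star$, a detail the paper leaves implicit.
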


\begin{proof}
\label{Proof of Optimality}

To show the bisection method applied to the feasibility problem is optimal, we must show that a unique minimum $\lambda$ exists and that bisection will always find that $\lambda$.
\newpage
First define the set of feasible values of $\lambda$ as $\Lambda = \{ \lambda : \exists\ \mathbf{u} \in \{0,1\}^N,  \mathbf{1}^T\mathbf{u} = M,  \left( \mathbf{t} - \lambda \mathbf{w} \right)^T\mathbf{u} \le 0 \}$.  This defines a set of attainable servicing costs for M consumers.  Therefore, the solution to (\ref{eqn:3}) is $\lambda^\star = \inf \{\lambda \in \Lambda \}$.  Bisection will find $\lambda^\star$ if the following condition holds for $\epsilon>0$: $\lambda \in \Lambda \implies \lambda + \epsilon \in \Lambda$ and $\lambda \notin \Lambda \implies \lambda - \epsilon \notin \Lambda$.  That is, if there is a unique transition point, then the bisection method is guaranteed to find it.

We first prove that $\lambda \in \Lambda \implies \lambda + \epsilon\in \Lambda $. If $ \lambda  \in \Lambda$, then there exists $\mathbf{u}_{\lambda}$ satisfying $\left( \mathbf{t} - \lambda \mathbf{w} \right)^T\mathbf{u}_\lambda \le 0$.  The vector $\mathbf{u}_{\lambda}$ is feasible for $\lambda + \epsilon$ since $( \mathbf{t} - (\lambda + \epsilon)\mathbf{w} )^T\mathbf{u}_\lambda = ( \mathbf{t} - \lambda\mathbf{w} )^T\mathbf{u}_\lambda  - \epsilon\mathbf{w}^T\mathbf{u}_\lambda \le 0$.  The first term is nonpositive because $\mathbf{u}_{\lambda}$ is feasible for $\lambda$.  The second term is always positive since $\mathbf{w}_i > 0$ and $\epsilon > 0$.

Now we prove that $\lambda \notin \Lambda \implies \lambda - \epsilon \notin \Lambda $. If $\lambda \notin \Lambda$ then $\nexists$ $\mathbf{u}_\lambda$ such that  $\left( \mathbf{t} - \lambda \mathbf{w} \right)^T\mathbf{u}_\lambda \le 0$.  However, $\exists$ $\mathbf{u}^{\prime}$ such that $\left( \mathbf{t} - \lambda \mathbf{w} \right)^T\mathbf{u}^{\prime} \le \left( \mathbf{t} - \lambda \mathbf{w} \right)^T\mathbf{u}$, $ \forall \mathbf{u} : \mathbf{u} \in \{0, 1\}^N$ and $\bd 1^T \bd u = M$.  That is, $\mathbf{u}^{\prime}$ is a selection vector which produces the smallest value of $\left( \mathbf{t} - \lambda \mathbf{w} \right)^T\mathbf{u}$.  We call this vector $\mathbf{u}^{\prime}$ a candidate for $\lambda$.  If $\lambda$ is infeasible, it must be the case that the candidate satisfies $\left( \mathbf{t} - \lambda \mathbf{w} \right)^T\mathbf{u}^{\prime} > 0$; otherwise, $\lambda$ would be feasible. Now consider a candidate for $\lambda - \epsilon$, and call it $\mathbf{u}^{\prime\prime}$.  For $\mathbf{u}^{\prime\prime}$ we can state the following:
\begin{align}
(\mathbf{t} - (\lambda - \epsilon)\mathbf{w})^T\mathbf{u}^{\prime\prime} &=  (\mathbf{t} - \lambda \mathbf{w})^T\mathbf{u}^{\prime\prime} + \epsilon \mathbf{w}^T\mathbf{u}^{\prime\prime} \label{eq-a-1}  \\
&\ge  (\mathbf{t} - \lambda \mathbf{w})^T\mathbf{u}^{\prime} + \epsilon \mathbf{w}^T\mathbf{u}^{\prime\prime} \label{eq-a-2}\\
& > 0
\end{align}
The inequality in \eqref{eq-a-1} - \eqref{eq-a-2} holds since $\mathbf{u}^{\prime}$ is a candidate for $\lambda$ and must therefore  satisfy  $(\mathbf{t} - \lambda \mathbf{w})^T\mathbf{u}^{\prime} \le (\mathbf{t} - \lambda \mathbf{w})^T\mathbf{u}$ for all $\mathbf{u}$, including $\mathbf{u}^{\prime\prime}$.  Since $(\mathbf{t} - (\lambda - \epsilon)\mathbf{w})^T\mathbf{u}^{\prime\prime} > 0$, we conclude that $\lambda-\epsilon$ is infeasible.
\end{proof}

Now the LSE has a method for putting together a group of consumers of size $M$ who will be relatively cheap to service, assuming all of their electricity can be purchased at the day-ahead price.
The LSE must decide how to set the appropriate size $M$.
A larger group will have a higher cost to serve because it will have to include more consumers whose consumption aligns more closely with peaks in electricity prices.
On the other hand, a larger group offers advantages of lower forecasting errors and easier administration.

\section{Aggregating to Mitigate Uncertainty}

In order to analyze the effects of uncertainty on the total cost faced by the LSE, we consider a somewhat different market design.
Suppose the LSE is unable to sell back surplus electricity at the real time settlement.
Such an arrangement is typically captured in deviation charges and is used by the system operator to discourage LSEs from purchasing excessive amounts of electricity on the day ahead market \cite{Bitar2012}.
Alternatively, the operator may want LSEs to commit to specific load profiles at the day ahead stage with a large penalty for consuming less than they committed to.
For this market design, the total cost paid by the LSE in (\ref{eqn:Initial_ck}) becomes:
\begin{equation}
c_k = \textbf{p}_k^{T}\tilde{\textbf{d}}_k + \textbf{q}_k^{T}\left.\left[\textbf{d}_k-\tilde{\textbf{d}}_k\right]\right._+,
\label{eqn:ck_with_tildas}
\end{equation}
where $\left.\left[x\right]\right._+$ denotes $\text{max}(x,0)$.

%
%

For a given group of consumers, the LSE chooses $\tilde{\textbf{d}}_k$ to minimize $\mathbb{E}_{\textbf{d}_k}\left[c_k\right]$. At the optimal choice $\tilde{\textbf{d}}_k^\star$, the following first order condition must hold:
\begin{equation}
\label{eqn:optimal_d_star_first_order}
\nabla_{\tilde{\textbf{d}}_k} \mathbb{E}_{\textbf{d}_k}\left[c_k\right] \bigg |_{\tilde{\textbf{d}}_k=\tilde{\textbf{d}}_k^\star} =\textbf{0}
\end{equation}
The day ahead price is known to the LSE when it purchases electricity on the day ahead market, so $\textbf{p}_k$ is a realized random variable at this stage.
The gradient can be applied as shown:
\begin{subequations}
\label{eqn:optimal_d_star_first_steps}
\begin{align}
\textbf{0} & = \textbf{p}_k + \mathbb{E}_{\textbf{d}_k}\left[\nabla_{\tilde{\textbf{d}}_k}\textbf{q}_k^{T}\left.\left[\textbf{d}_k-\tilde{\textbf{d}}_k^\star\right]\right._+\right]\\
& = \textbf{p}_k + \mathbb{E}_{\textbf{d}_k}\left[-\textbf{q}_k^T\text{diag}(\vec{\textbf{1}}\lbrace\textbf{d}_k>\tilde{\textbf{d}}_k^\star\rbrace)\right],
\end{align}
\end{subequations}
where $\vec{\textbf{1}}\lbrace\textbf{d}_k>\tilde{\textbf{d}}_k^\star\rbrace$ is a vector whose $i$th element is one if the $i$th element of $\textbf{d}_k$ is greater than that of $\tilde{\textbf{d}}_k^\star$.
Next, we assume that the difference between $\textbf{d}_k$ and $\tilde{\textbf{d}}_k^\star$ is independent of the real time price.
\begin{subequations}
\label{eqn:optimal_d_star_last_steps}
\begin{align}
\textbf{0} & = \textbf{p}_k - \mathbb{E}_{\textbf{d}_k}\left[\textbf{q}_k\right]^T \text{diag}( \mathbb{E}_{\textbf{d}_k}\left[\vec{\textbf{1}}\lbrace\textbf{d}_k>\tilde{\textbf{d}}_k^\star\rbrace\right])\\
& = \textbf{p}_k - \mathbb{E}_{\textbf{d}_k}\left[\textbf{q}_k\right]^T \text{diag}(\vec{\mathbb{P}}\lbrace\textbf{d}_k>\tilde{\textbf{d}}_k^\star\rbrace),
\end{align}
\end{subequations}
where $\vec{\mathbb{P}}\lbrace\textbf{d}_k>\tilde{\textbf{d}}_k^\star\rbrace)$ is a vector whose $i$th element is the probability that the $i$th element of $\textbf{d}_k$ is greater than that of $\tilde{\textbf{d}}_k^\star$.
Equation (\ref{eqn:optimal_d_star_last_steps}b) gives an easy way to compute the optimal $\tilde{\textbf{d}}_k^\star$, assuming the distribution of $\textbf{d}_k$ is known.
This is a variation on the classic newsvendor model \cite{Petruzzi1999},\cite{Khouja1999}.

Recall that $\tilde{\textbf{d}}_k^\star=\textbf{d}_k+\boldsymbol{\epsilon}_k+\boldsymbol{\delta}_k^\star$.
Therefore, we can write $\vec{\mathbb{P}}\lbrace\textbf{d}_k>\tilde{\textbf{d}}_k^\star\rbrace$ as $\vec{\mathbb{P}}\lbrace-\boldsymbol{\epsilon}_k>\boldsymbol{\delta}_k^\star\rbrace$, so the optimal day ahead purchase quantities are based on the distribution of the forecast errors.
If we assume that the forecast errors are jointly Gaussian with zero mean and covariance $\boldsymbol{\Sigma}$, we can write $\boldsymbol{\delta}_k^\star = \textbf{f}_k(\boldsymbol{\Sigma})$.
Substituting the optimal electricity purchase into (\ref{eqn:ck_with_tildas}), and taking the expectation, we arrive at:
\begin{subequations}
\label{eqn:exp_ck_with_solution_substituted}
\begin{align}
& \mathbb{E}_{\textbf{d}_k}\left[c_k\right] = \mathbb{E}_{\textbf{d}_k}\left[\textbf{p}_k^T\textbf{d}_k\right] + g_k(\boldsymbol{\Sigma}) \\
& g_k(\boldsymbol{\Sigma}) = \mathbb{E}_{\textbf{d}_k}\left[\textbf{f}_k(\boldsymbol{\Sigma})\right] + \mathbb{E}_{\textbf{d}_k}\left[\textbf{q}_k\right]^T\mathbb{E}_{\textbf{d}_k}\left[\left.\left[-\boldsymbol{\epsilon}_k-\textbf{f}_k(\boldsymbol{\Sigma})\right]\right._+\right].
\end{align}
\end{subequations}

Recall that (\ref{eqn:optimal_d_star_first_order}) relates to a component of (\ref{eq:RK_with_expectation}), so we can substitute (\ref{eqn:exp_ck_with_solution_substituted}a) back into (\ref{eq:RK_with_expectation}) to obtain:
\begin{subequations}
\label{eqn:rk_for_positive_penalty}
\begin{align}
r_K & = \frac{K\,\mathbb{E}_{\textbf{d}_k}\left[\textbf{p}_k^T\textbf{d}_k\right]}{\textbf{1}^{T}\textbf{D}_K} + \frac{\Sigma_{k=1}^K \, g_k(\boldsymbol{\Sigma})}{\textbf{1}^{T}\textbf{D}_K} \\
& = \lambda_K + \frac{\Sigma_{k=1}^K \, g_k(\boldsymbol{\Sigma})}{\textbf{1}^{T}\textbf{D}_K}.
\end{align}
\end{subequations}
In the next section of this paper, we will show how the first and second terms of (\ref{eqn:rk_for_positive_penalty}) vary with the composition and size of the group of consumers selected by the LSE, in other words how they vary with $\textbf{u}$ and $\left. \|\textbf{u}\| \right._1$.
For now we will discuss these relationships in general terms.

The LSE varies $\textbf{u}$ by controlling whom it recruits into its group.
If the LSE recruits only consumers who use most of their electricty at peak hours, its per unit cost of electricity will be higher, and this is captured in $\lambda_K$.
Furthermore, if the LSE recruits only a very small number of consumers, it will face larger forecast errors, which may lead to higher variation in $r_K$, partially through the $g_k(\boldsymbol{\Sigma})$ term. See \cite{Zhang2014} for a treatment of the effect of forecast uncertainty on prices.

Thus, the LSE faces a tradeoff when assembling a group of consumers to service.
After showing some initial empirical relationships in the data, we will propose a heuristic algorithm for the LSE to segment the population into stable groups that can be offered different rates based on how they consume electricity.

\section{Segmenting the population}

\subsection{Data description}

The data are hourly smart meter readings over a period of one year (summer 2010 to summer 2011) for 110,000 residential customers of PG\&E.
In addition, we use day ahead and real time prices published by the California Independent System Operator (CAISO) over the same period.
One limitation of our data is that it only spans one year.
We use part of the data - the first nine months - to evaluate and segment the consumer population, and the rest of the data to validate the segmentation scheme.
Therefore, we may be introducing a seasonal bias into the segmentation in our simulation, but that would readily be fixed by using data spanning an entire year for the segmentation algorithm presented below.

\subsection{Forecasting method}

For the purposes of this paper, we use a relatively simple forecaster.
We use an ARMA model, with temperature as an external regressor, to predict the daily total electricity usage by the consumers, $\hat{y}$.
We use a vector ARMA model, again using temperature as an external regressor, to predict the normalized load shape, $\hat{\textbf{s}}$.
This is a vector whose elements are the fraction of the daily total electricity consumed in each hour.
In other words, $\textbf{1}^T\hat{\textbf{s}} = 1$.
Finally, we multiply the predicted daily total by the predicted normalized load shape to obtain the predicted consumption for the next day: $\hat{\textbf{d}} = \hat{y}\hat{\textbf{s}}$.

\subsection{Range of $\hat{\lambda}^{(i)}$}

To begin with, we evaluate the range of $\hat{\lambda}^{(i)}$ values in the entire population.
Recall that the LSE seeks to minimize the per unit cost of electricity that it incurs on the wholesale market.
One natural way for it to do this is to choose consumers who use electricity at off-peak times with respect to the day ahead price, and $\hat{\lambda}^{(i)}$ captures this alignment for the historical consumption of the $i$th consumer.

\begin{figure}[ht]
\centering
\includegraphics[width=3.5in,trim={0 0 0 15mm},clip]{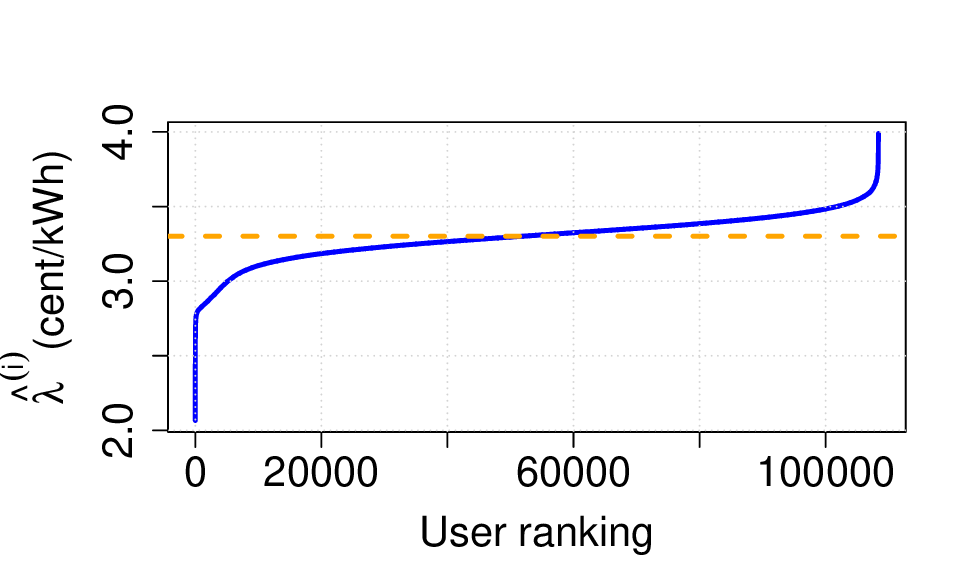}
\\[-4mm]
\caption{A plot of $\hat{\lambda}^{(i)}$ vs. consumer rank, in order of increasingly expensive consumers. The difference between the cheapest and the costliest consumer is about 2.0 cent/kWh. The steep slopes at either end of the curve mean that there are relatively few very cheap consumers or very expensive consumers. The horizontal orange line is the average per unit cost for the population.}
\label{fig:ranked_lambdas}
\end{figure}

Figure \ref{fig:ranked_lambdas} shows how $\hat{\lambda}^{(i)}$ increases as we go from the cheapest consumer to the most expensive consumer in our dataset, ranked by their $\hat{\lambda}^{(i)}$ metric computed over the first nine months of the year.
The most expensive consumer is almost twice as expensive as the cheapest consumer, a significant difference.
This suggests that the LSE may be able to group together cheaper consumers and offer them a lower rate.

\subsection{Application of optimal recruitment algorithm}

As mentioned before, the LSE would likely need to aggregate consumers together to service them practically.
We apply Algorithm \ref{algorithm-optimal} using the first nine months of data as the historical period.
Let $\hat{\lambda}_M$ denote the minimum value of $\lambda$ achieved using the algorithm given a group size $M$.
Figure \ref{fig:lambda_M_increasing} illustrates how $\hat{\lambda}_M$ varies with $M$.
Note that at $M=39,800$, approximately 37\% of the population, $\hat{\lambda}_M$ is within 5\% of the population average.
On the other hand, $\hat{\lambda}_{500}$ is 2.75 cents/kWh, which is 17\% lower than the population average, so the optimal group of size $500$ could be offered a meaningfully cheaper rate plan.

\begin{figure}[ht]
\centering
\includegraphics[width=3.5in,trim={0 0 0 15mm},clip]{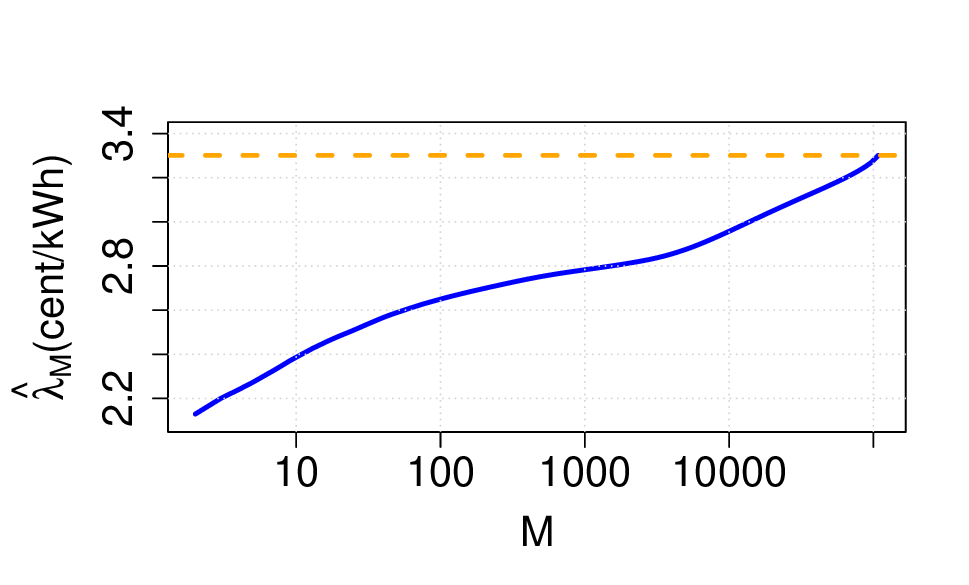}
\\[-4mm]
\caption{$\hat{\lambda}_M$ is increasing in $M$ because as the group size increases, the algorithm includes consumers who are more and more aligned with the day ahead price vector. The horizontal dashed line is $\hat{\lambda}_M$ when $M$ is the size of the entire population. In other words, it is the average per unit cost of electricity for servicing the entire population at the day ahead price. Randomly selected groups of consumers have a $\hat{\lambda}$ close to this population average.}
\label{fig:lambda_M_increasing}
\end{figure}

The LSE now knows how its per unit cost of electricity will change based on how large of a group of low-cost consumers it recruits.
Practical operational considerations may set a lower limit on group size, but the LSE needs a way to determine how big of a group it should aggregate, in other words, how to select $M$.
We propose that the LSE take into account how group size affects its forecasting error.

The larger the group, the more accurately its aggregate consumption can be forecasted.
To demonstrate this result in the data, we use the forecaster described previously to predict hourly group consumption for the last three months of the year.
We do this for two types of groups - groups constructed by randomly assembling individuals, and groups constructed using the optimal recruitment algorithm.
The error metric we use is the coefficient of variation, $CV$, which is commonly used in forecasting literature as a measure of performance.
Let $d(t)$ be the actual consumption of the group of consumers at time $t$, and let $\hat{d}(t)$ be the forecasted consumption for the same period.
Then
\begin{equation}
CV=100\frac{\sqrt{\frac{1}{T}\Sigma_{t=1}^T\big(d(t)-\hat{d}(t)\big)^2}}{\frac{1}{T}\Sigma_{t=1}^Td(t)} (\%).
\label{eqn:CV_def}
\end{equation}

Figure \ref{fig:CV_group_size} illustrates the effect of group size on forecast error.
For both the random groups and optimal groups, the coefficient of variation of the forecast error decreases with group size.
The decrease for random groups is smooth and monotone, while the curve for the optimal groups exhibits more complicated behavior.
If the error curve for optimal groups coincided with that of the random groups, we could assert that $CV$, and by extension $\boldsymbol{\Sigma}$, is a function solely of group size - but that is not the case.
It is clearly a function of both $\textbf{u}$ and $\left. \|\textbf{u}\| \right._1$, both composition and size.

The $CV$ is related to the variability in electricity costs faced by the LSE \cite{Zhang2014}.
If the $CV$ is high, and if the day ahead and real time prices tend to be different, the LSE can expect a higher variation in its per unit cost of electricity from one day to the next because the higher forecast errors will require it to purchase (or sell) more electricity at the real time settlement.

\begin{figure}[ht]
\centering
\includegraphics[width=3.5in,trim={0 0 0 15mm},clip]{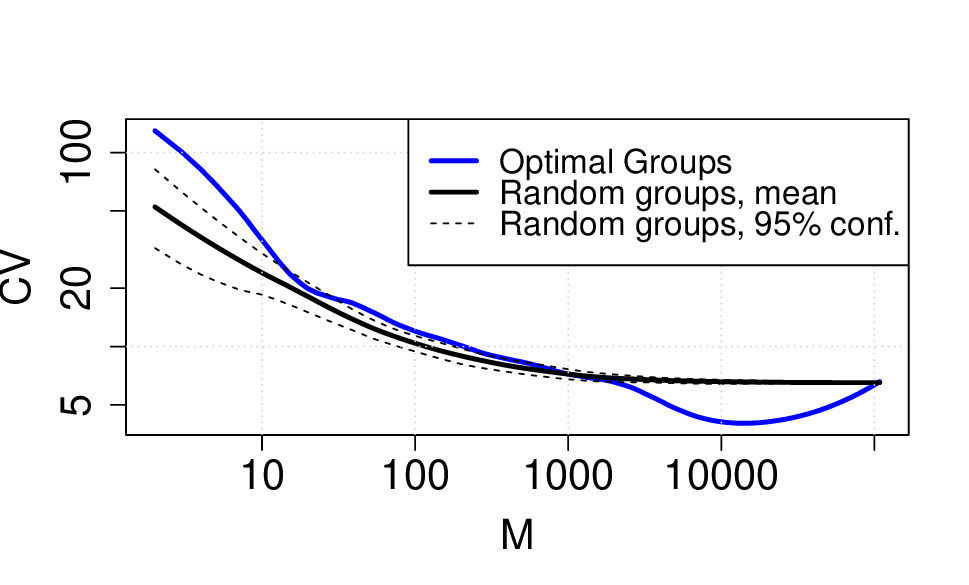}
\\[-4mm]
\caption{$CV$ is decreasing in $M$ because the consumption of larger groups can be forecasted more accurately. The black line is the mean CV for randomly constructed groups, and the dashed lines are the 95\% confidence interval. The blue line is for groups constructed using the optimal recruitment algorithm. Until about $M=1000$, the optimal groups have higher forecasting errors than the random groups.}
\label{fig:CV_group_size}
\end{figure}

Therefore, if the LSE wants to control the variation in its per unit cost of electricity, it must aggregate a certain number of consumers together.
The LSE faces a trade-off - a smaller group size allows for a lower per unit cost, while a larger group size allows for a lower variation in per unit cost.
Suppose the LSE sets an upper limit $\overline{CV}$ on the forecast error as a way of controlling the variation in its costs.
Then it should determine the minimum group size $\underline{M}$ such that the $CV$ for optimal groups of that size is less than or equal to $\overline{CV}$.
Finally, it should run the optimal recruitment algorithm to choose the lowest-cost group of consumers of size $\underline{M}$.
Now the LSE knows which consumers to recruit to get a group that will have a low per unit cost of electricity and an adequately small variation in that cost.
For example, if the LSE set a limit of $\overline{CV}=10\%$, it would find on Figure \ref{fig:CV_group_size} that it needs to recruit a group of size $\underline{M}=209$.
It would then run Algorithm 1 with $M=209$ and obtain the selection vector $\textbf{u}$ that would identify the consumers it should recruit to the group.
The LSE would then refer to Figure \ref{fig:lambda_M_increasing} to determine that the long run average per unit cost for this group will be 2.70 cents per kWh, and it can offer a rate plan to this group based on that cost.

\subsection{Segmenting the entire population}

We now turn to how the LSE could use the methods we've presented to serve an entire population.
The LSE is interested in designing \emph{stable} rate schemes, in which customers do not have an incentive to jump frequently from one to another.
This consideration is relevant to current practice - load serving entities in ERCOT have reported substantial customer turnover, which is undesirable \cite{AEE2013,Carson2012}.
In our method, the LSE will segment the population into groups that have different long run rates but similar levels of variation in daily cost.

The LSE starts by considering the entire population and establishing an accepted level of variation, $\overline{CV}$.
It generates a forecast error curve similar to that for the optimal groups in Figure \ref{fig:CV_group_size}, and it selects the smallest group size $\underline{M}_1$ such that the $CV$ for that group is less than or equal to $\overline{CV}$.
There is an optimal group of low-cost consumers $\textbf{u}_1$ corresponding to this group size $\underline{M}_1$.
The LSE takes this group of consumers and serves it as an aggregate, offering them a rate plan based on their group cost $\hat{\lambda}_{\underline{M}_1}$.

After removing that first set of consumers from the population, the LSE repeats this process on the remaining consumers, with the same $\overline{CV}$ limit, until the entire population is segmented into groups.
Say that the process results in $P$ total groups.
These groups, $\textbf{u}_1, \textbf{u}_2,...,\textbf{u}_P$, will be of various sizes.
Each subsequent group will be offered a higher rate than the preceding group because the lowest-cost consumers have already been assigned to prior groups.
In other words, $\hat{\lambda}_{\underline{M}_i}\leq\hat{\lambda}_{\underline{M}_{i+1}}$.
However, by construction, every group will have the same level of risk, $\overline{CV}$.

Returning to our previous example, if the LSE sets $\overline{CV}=10$, the first group of consumers it recruits will be of size $\underline{M}_1=209$, and it will offer that group a rate plan based on an average per unit cost of $\hat{\lambda}_{\underline{M}_1}=2.70$ cents per kWh.
The LSE then removes this group of consumers from the population.
Figure \ref{fig:Second_Round_CV10} illustrates the next iteration of the LSE's segmentation process.
The LSE again applies the $\overline{CV}=10$ threshold and chooses to go with a group of size $\underline{M}_2=145$, as seen in the top panel.
The LSE recruits these consumers into a group and offers them a rate plan based on its average per unit cost of $\hat{\lambda}_{\underline{M}_2}=2.81$ cents per kWh, as illustrated in the bottom panel.
The LSE would repeat this process until all consumers had been assigned into a group.
Note that it is possible that on the last iteration, the $CV$ curve for the remaining consumers will be entirely above $\overline{CV}$.
In that case, the LSE must aggregate together all of the remaining consumers, and this last group will have a forecast error higher than the other groups.
Alternatively, the LSE can decide not to serve the remaining consumers, or to segment them on the basis of a new, higher $\overline{CV}$ threshold.

\begin{proposition}
This method produces stable pricing plans in the following sense: assuming that all consumers agree to the variation limit $\overline{CV}$, and given the initial segmentation of the population by the LSE, no single consumer can unilaterally take an action that would improve her outcome.
\end{proposition}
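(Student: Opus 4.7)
My plan is to consider any consumer $c$ currently placed by the LSE in group $G_i$ with rate $\lambda_i^\star := \hat{\lambda}_{\underline{M}_i}$ and show that the only unilateral action available to her---requesting to be reassigned to a different group $G_j$, with that group's rate recomputed to include her---does not lower the rate she would pay. I would first dispose of the easy case $j \geq i$: by construction of the greedy segmentation the rates are non-decreasing in $j$, so $\lambda_j^\star \geq \lambda_i^\star$, and appending $c$ to $G_j$ can only preserve or raise the group's rate, leaving $c$ at least as badly off as before.

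For $j < i$ the argument rests on a lemma I would extract from the proof of Theorem~1. At iteration $j$ of the segmentation, $c$ was in the remaining pool $\mathcal{R}_j$ but not chosen for $G_j$. The optimality characterisation in Theorem~1 tells us that $\sum_{k \in G_j}(t_k - \lambda_j^\star w_k) = 0$ and that $G_j$ consists of the $M_j$ indices with smallest $t_k - \lambda_j^\star w_k$. The zero-sum condition forces the maximum of $t_k - \lambda_j^\star w_k$ over $G_j$ to be nonnegative, and because $c$'s value is at least this maximum we obtain $t_c - \lambda_j^\star w_c \geq 0$, i.e. $t_c/w_c \geq \lambda_j^\star$. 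Iterating this observation over all earlier iterations gives $t_c/w_c \geq \lambda_r^\star$ for every $r < i$. Consequently the recomputed rate
\[
\frac{T_j + t_c}{W_j + w_c} \;=\; \frac{W_j}{W_j+w_c}\,\lambda_j^\star \;+\; \frac{w_c}{W_j+w_c}\,\frac{t_c}{w_c}
\]
is a convex combination of $\lambda_j^\star$ and $t_c/w_c \geq \lambda_j^\star$, hence at least $\lambda_j^\star$; I would then chain this with the successive inequalities $t_c/w_c \geq \lambda_r^\star$ to argue the combination lies above $\lambda_i^\star$, so the move does not improve $c$'s outcome.

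The delicate step---and the main obstacle I anticipate---is that the greedy algorithm sorts by $t_k - \lambda_k^\star w_k$ rather than by $t_k/w_k$, so $G_i$ can contain some consumers with $t_c/w_c < \lambda_i^\star$. For such a consumer the weighted average above could in principle dip below $\lambda_i^\star$ and the naive reading of unilateral stability would fail. To close the gap I would invoke the role of the variation cap $\overline{CV}$: the LSE only offers a recomputed rate to a reassigned member if the resulting composition still satisfies the $\overline{CV}$ constraint, and I would argue that the reassignment schemes which would genuinely benefit $c$ either violate this constraint or correspond to a re-partition that the greedy algorithm would have produced in the first place, contradicting the selection of $G_i$ at iteration $i$. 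Making that last accounting precise---tying the monotonicity of $\hat\lambda_M$ in $M$ (Figure~\ref{fig:lambda_M_increasing}) together with the per-iteration optimality of Theorem~1---is where I expect most of the real work to lie.
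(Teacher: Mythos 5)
There is a genuine gap, and it comes from reading the stability claim differently than the paper does. You set out to show that a consumer $c$ in group $\mathbf{u}_i$ would not even \emph{want} to join an earlier group $G_j$, $j<i$, because the recomputed rate $(T_j+t_c)/(W_j+w_c)$ would exceed her current rate $\hat{\lambda}_{\underline{M}_i}$. As you yourself notice in your final paragraph, that claim is false in general: your convex-combination identity shows the recomputed rate lies between $\lambda_j^\star$ and $t_c/w_c$, and for any realistically large $W_j$ it sits just above $\lambda_j^\star$, which is strictly below $\lambda_i^\star$. So the consumer absolutely would benefit from being admitted to an earlier group, and no appeal to the $\overline{CV}$ cap or to the monotonicity of $\hat{\lambda}_M$ will rescue the statement you are trying to prove. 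The paper's resolution is different and much simpler: the move to $G_{i-1}$ is not a \emph{unilateral} action, because it requires the consent of the incumbents of $G_{i-1}$, and they will refuse --- they already have enough members to meet the $\overline{CV}$ target without her, and admitting her can only raise their rate since the optimal recruitment algorithm passed her over when forming their group. The blocking mechanism is the receiving group's veto, not the mover's indifference. (Moving to a later group $G_j$, $j>i$, is handled as you do: those rates are higher.)

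The ironic part is that the technical content you develop is exactly the right lemma --- it is just pointed at the wrong conclusion. Your derivation that $\sum_{k\in G_j}(t_k-\lambda_j^\star w_k)=0$ at the optimum, hence $t_c-\lambda_j^\star w_c\ge 0$ for any consumer $c$ left in the pool but not selected, hence $(T_j+t_c)/(W_j+w_c)\ge \lambda_j^\star$ by the convex-combination identity, is precisely the rigorous justification for the paper's one-line assertion that ``adding this new consumer will only lead to an increase in their rate.'' In that sense your write-up actually sharpens the paper's argument, which states this fact without proof. If you redirect your lemma to establish why the incumbents of every earlier group would reject $c$, and then combine it with the observation that they have no need for additional members once $\overline{CV}$ is met, the proof closes cleanly; the ``delicate step'' you anticipated spending most of your effort on simply does not need to be taken.
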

\begin{proof}
Suppose that a consumer has been assigned to group $\textbf{u}_i$ and offered that group's average rate, $\hat{\lambda}_{\underline{M}_i}$.
The consumer has no incentive to join the group $\textbf{u}_{i+1}$ because that group pays a higher rate.
The consumer would like to join the group $\textbf{u}_{i-1}$ to enjoy their lower rate.
However, the existing consumers in group $\textbf{u}_{i-1}$ will not agree to this because they already have enough consumers in their group to reduce their risk level to $\overline{CV}$.
Adding this new consumer will only lead to an increase in their rate because we know that the optimal selection algorithm did not choose this consumer to be part of $\textbf{u}_{i-1}$.
\end{proof}

\begin{figure}[ht]
\centering
\subfigure{\includegraphics[width=3.5in,trim={0 0 0 15mm},clip]{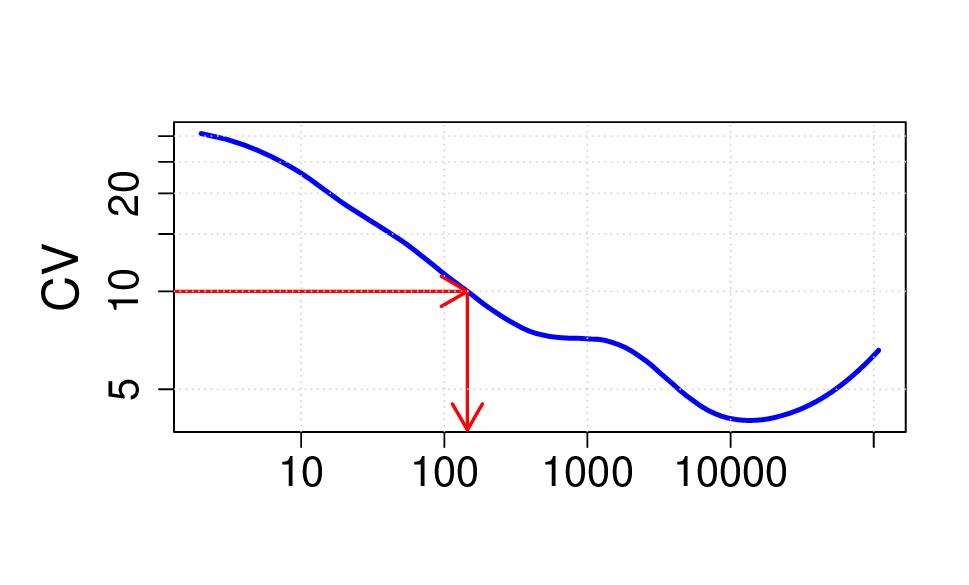}\label{fig:Second_CVs}}
\\[-10mm]
\subfigure{\includegraphics[width=3.5in,trim={0 0 0 15mm},clip]{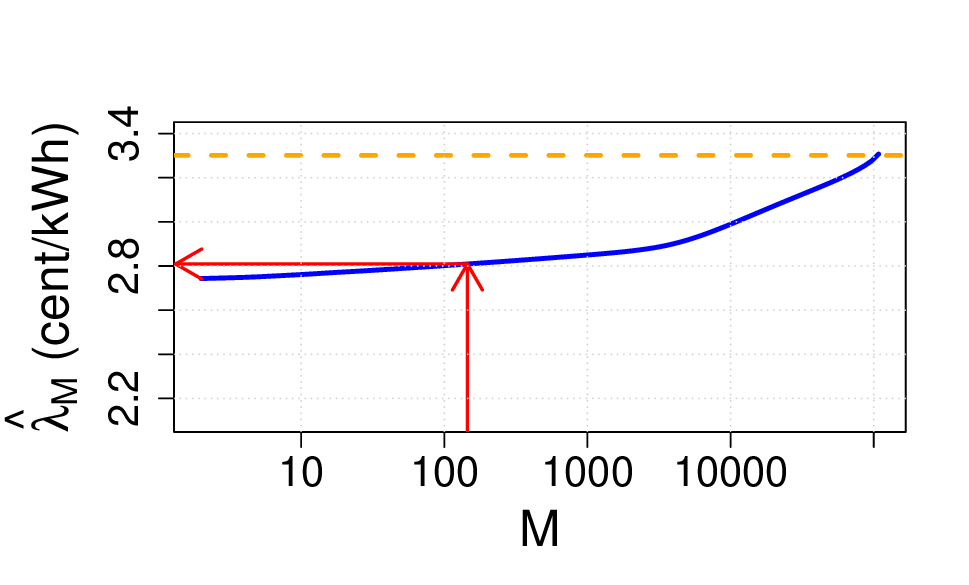}\label{fig:Second_Lambdas}}
\\[-4mm]
\caption{These two figures illustrate the next step in the segmentation process when using $\overline{CV}=10$ as the limit. The LSE faces a new curve for $CV$ vs. group size $M$. The LSE applies the $\overline{CV}=10$ threshold to the forecast error curve, determines the required group size, and then reads off their group per unit cost from the bottom figure. The red arrows trace out that process. Note that the starting point for the optimal group $\hat{\lambda}_M$ is noticeably higher than it was for the entire population in Fig. \ref{fig:lambda_M_increasing}. This makes sense given that the cheapest consumers were already recruited into the first group, so the remaining consumers will have higher per unit costs. The orange dashed line on the $\hat{\lambda}_M$ plot is the per unit cost for servicing the entire population, the same value as in Fig. \ref{fig:lambda_M_increasing}.}
\label{fig:Second_Round_CV10}
\end{figure}

\begin{figure}[ht]
\centering
\subfigure{\includegraphics[width=3.5in,trim={0 0 0 15mm},clip]{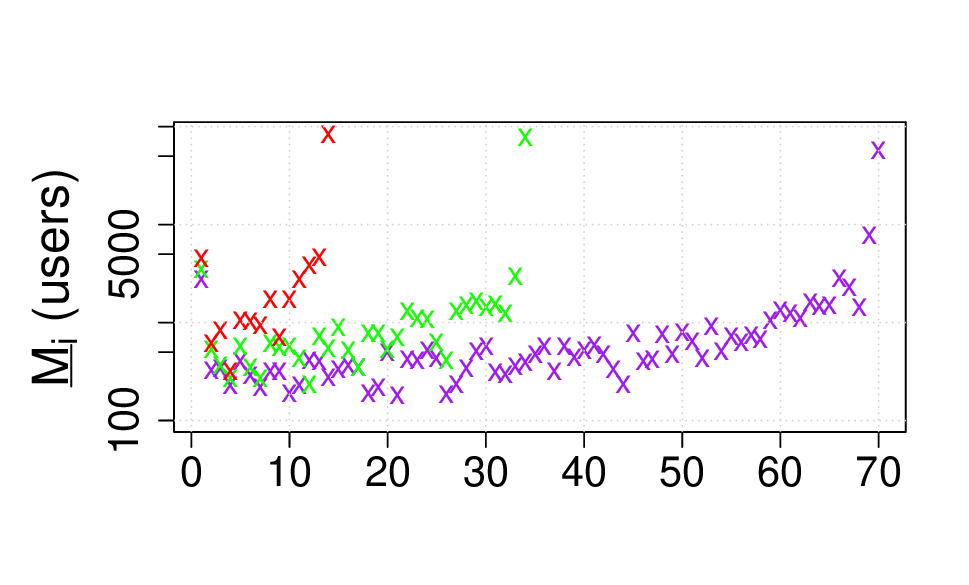}\label{fig:Segmented_Ms}}
\\[-10mm]
\subfigure{\includegraphics[width=3.5in,trim={0 0 0 15mm},clip]{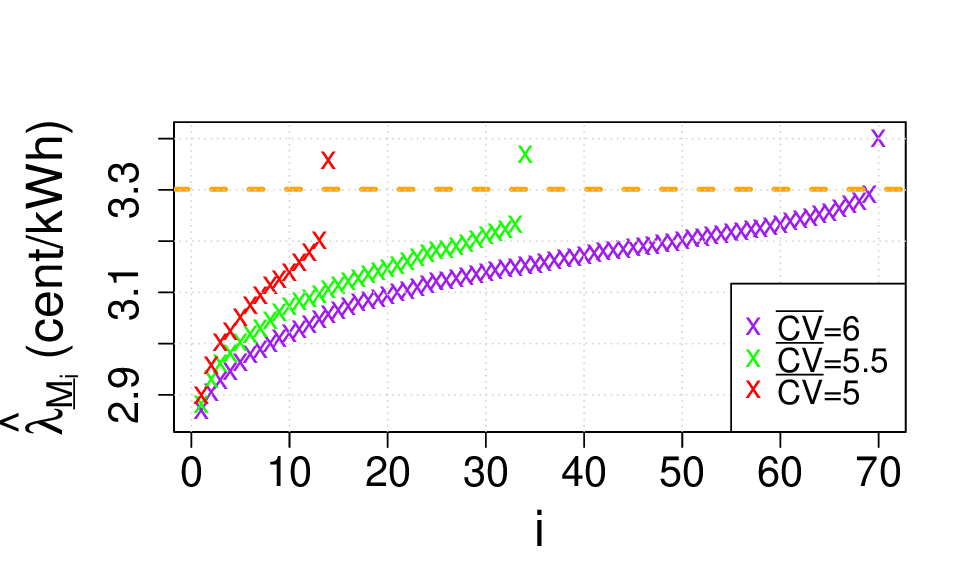}\label{fig:Segmented_Lambdas}}
\\[-4mm]
\caption{These figures illustrate the results of segmenting the entire population of consumers using three different values of the $\overline{CV}$ threshold.
The first figure shows how the optimal group size $\underline{M}_i$ varies as the segmentation proceeds.
The $\overline{CV}=6$ segmentation proceeds with smaller group sizes because it doesn't require aggregating as many consumers together to achieve that forecast error threshold.
In each of the three cases, the last and largest group is well over 50,000 people. These last groups did not achieve the required $\overline{CV}$ threshold.
The second figure shows the per unit costs for servicing each of the groups.
The orange dashed line is the per unit cost for the entire population, a reference point for this segmentation.
As expected, in each case, the first groups have per unit costs below the population average, and the later groups approach and then exceed that average.}
\label{fig:Segmentations}
\end{figure}

\begin{figure}[ht]
\centering
\includegraphics[width=3.5in,trim={0 0 0 15mm},clip]{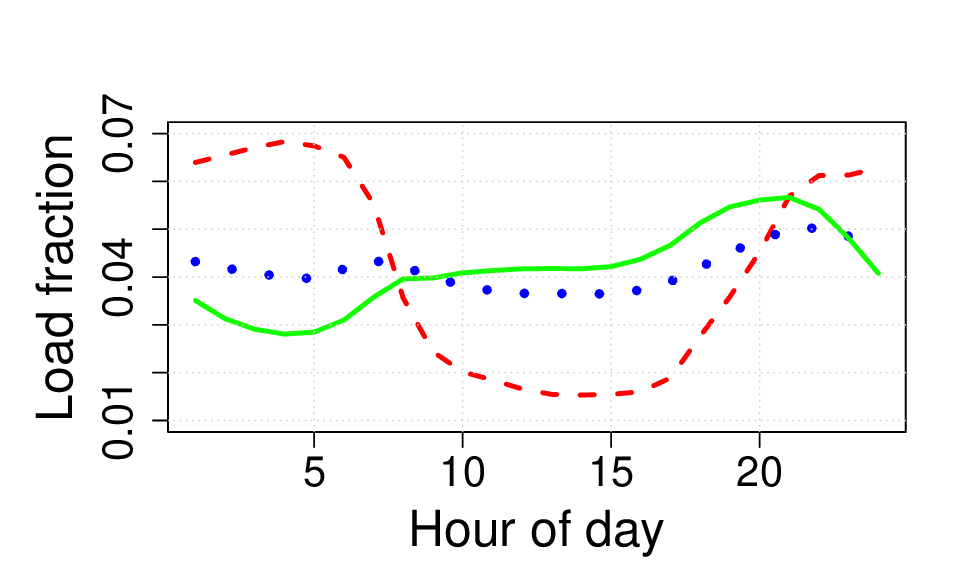}
\\[-4mm]
\caption{The three curves are the average daily aggregate load shapes for three groups created by the segmentation algorithm using $\overline{CV}=6$. The dashed red curve is for the second group segmented out. Its consumption is mostly off peak, and its per unit cost to serve is 2.90 cents/kWh. The dotted blue curve is for the 22nd group segmented out, with per unit cost to serve of 3.10 cents/kWh. The solid green curve is for the 64th group segmented out. Its consumption is mostly on peak, and its per unit cost to serve is 3.25 cents/kWh.}
\label{fig:three_groups}
\vspace{-2mm}
\end{figure}

We simulate this pricing design scheme on the entire population of consumers in our dataset, using $\overline{CV} = 6$, $5.5$, and $5$.
We evaluate the daily per unit cost of electricity that the LSE would incur for servicing each group.
Figure \ref{fig:Segmentations} illustrates the results of our simulation.
As expected, when using the higher forecast error threshold, the LSE segments the population into smaller groups.
For all three cases, the groups segmented out earlier incur costs noticeably lower than the social average cost, whereas the groups formed later incur higher costs.

For $\overline{CV}=6$, the LSE segments the population into 70 groups. The first 69 all satisfy the forecast error threshold and have per unit costs ranging from $\hat{\lambda}_{\underline{M}_1}=2.87$ to $\hat{\lambda}_{\underline{M}_{69}}=3.29$ cents per kWh.
The average size of these groups is 738 consumers.
All of these groups, covering almost 51,000 consumers, have a per unit cost lower than the population average.
The 70th group lumps together the remaining 57,000 consumers, with a per unit cost of $\hat{\lambda}_{\underline{M}_{70}}=3.40$, which is above the population average.
This last group has a forecast error of $CV=7.9$, exceeding the $\overline{CV}=6$ threshold.

For comparison, when using $\overline{CV}=5$, the first 13 groups satisfy the forecast error threshold and have costs ranging from 2.90 to 3.20 cents per kWh.
The average size of these groups is 1892 consumers, and in total, they include about 25,000 consumers.
The remaining 83,000 consumers are placed into the last group, with a per unit cost of 3.36 cents per kWh, which is above the population average, and a forecast error of $CV=7.1$, well above the threshold.

Figure \ref{fig:three_groups} shows the average daily aggregate load shapes for three groups ($i=2,22,64$) from the segmentation performed with $\overline{CV}=6$. The groups have distinct consumption patterns, reflected in their differing per unit costs. This demonstrates the effectiveness of the segmentation algorithm.

\section{Conclusion}

We developed and proposed a method for an LSE to segment a population of electricity consumers on the basis of their average per unit cost.
Using their historical consumption patterns, the LSE identifies and aggregates consumers who are cheaper to serve.
The LSE aggregates enough consumers to reduce the forecast error to an acceptable level, which is common across all consumer groups.
We quantified the trade-offs involved in this process when segmenting a population of over 100,000 PG\&E residential consumers.
Our simulation demonstrated that the LSE can offer each group a different average per unit rate plan that is based on the extent to which they consume their electricity at peak times.

\section*{Acknowledgment}
We would like to thank Pacific Gas and Electric Company for providing us with the smart meter data used in this study.

\ifCLASSOPTIONcaptionsoff
  \newpage
\fi




\bibliographystyle{IEEEtran}
\bibliography{citation}

\end{document}